\theoremstyle{plain}
\newtheorem{theorem}{Theorem}
\newtheorem{lemma}[theorem]{Lemma}
\newtheorem{proposition}[theorem]{Proposition}
\newtheorem{claim}[theorem]{Claim}
\theoremstyle{definition}
\newtheorem{definition}[theorem]{Definition}
\newtheorem{remark}[theorem]{Remark}
\newtheorem{example}[theorem]{Example}
\newcommand{\RR}{{\mathbb{R}}}
\begin{document}

\title[Biased infinity Laplacian Boundary Problem]{Biased infinity Laplacian Boundary Problem \\ on finite graphs}

\author{Yuval Peres}
\email{yuval@yuvalperes.com}

\author{Zoran {\v{S}}uni{\'c}}
\address{Deptartment of Mathematics, Hofstra University, Hempstead, NY 11549, USA}
\email{zoran.sunic@hofstra.edu}

\begin{abstract}
We provide an algorithm, running in polynomial time in the number of vertices, computing the unique solution to the biased infinity Laplacian Boundary Problem on finite graphs. The algorithm is based on the general outline and approach taken in the corresponding algorithm for the unbiased case provided by Lazarus et al. The new ingredient is an adjusted (biased) notion of a slope of a function on a path in a graph. The algorithm can be used to determine efficiently numerical approximations to the viscosity solutions of biased infinity Laplacian PDEs.
\end{abstract}

\keywords{Biased infinity Laplacian, Tug-of-War game, Boundary Problem}

\subjclass[2010]{05C57, 91A15, 91A43, 49N70}

\maketitle


\section{Biased infinity Laplacian and tug-of-war on graphs}

Connections between the infinity Laplacian PDE, biased or unbiased, and tug-of-war games on graphs were explored in~\cite{peres-s-s-w:tug, peres-p-s:biased} to show that the corresponding Boundary Problem has a unique viscosity solution under broad conditions. As observed in~\cite{peres-p-s:biased}, numerical approximations to such solutions in the unbiased case can be efficiently obtained from the algorithm by Lazarus et al.~\cite{lazarus-etal:algorithm} computing the infinity harmonic extension on a finite graph in polynomial time in the size of the graph (numerical approximation schemes for the unbiased case are provided in~\cite{oberman:scheme}). We provide an algorithm, running in polynomial time in the number of vertices, computing the unique solution to the biased infinity Laplacian Boundary Problem on finite graphs, for any bias parameter. This solves the open problem suggested in~\cite[Section 7.1]{peres-p-s:biased}.

\subsection{Notation and general assumptions} Fix a probability $p \in (0,1)$, the complementary probability $q=1-p$ and the ratio $r=q/p$. Clearly, any of the numbers $p$, $q$, and $r$ uniquely determines the other two (in fact $p=1/(1+r)$ and $q=r/(1+r)$). Note that some of the discussion below is valid in the case when $p=1$ and $q=r=0$. This case is trivial enough on its own and we will not pursue it systematically.

All graphs are connected, undirected, and simple (no loops, no multiple edges). When we use $G$ to denote a graph we assume that its set of vertices is denoted by $V$ and its set of edges by $E$. Similarly, the sets of vertices and edges of $G_*$ are $V_*$ and $E_*$, respectively, and so on. By definition, no vertices in a path repeat (not even the initial and the terminal vertex). We sometimes denote a path with initial vertex $x$ and terminal vertex $y$ by $[x,y]$. Even though this notational convention may not be precise, we find it useful, especially when the intermediate vertices are not relevant in the discussion. The length of the path $[x,y]$ is denoted by $\ell[x,y]$. The graphs will usually come with a boundary, which may be any fixed and nonempty set of vertices $V_0$ of $G$. We write $x \sim y$ to indicate that $x$ and $y$ are neighbors.

\subsection{Biased infinity Laplacian}

\begin{definition}[$r$-biased infinity Laplacian and infinity harmonic functions]
Let $G$ be a connected graph with boundary $V_0$.

The $r$-biased infinity Laplacian $\Delta_\infty^r$ on the space of real functions on $G$ is given by
\[
 (\Delta_\infty^r u)(x) =
 \begin{cases}
  \displaystyle{p \cdot \max_{y \sim x} \{u(y)\} + q \cdot \min_{y \sim x} \{u(y)\} - u(x)}, & x \in V \setminus V_0 \\
  u(x), & x \in V_0
  \end{cases}.
\]

A function $u: V(G) \to \RR$ is $r$-biased infinity harmonic if
\[
 \Delta_\infty^r u  = 0,
\]
outside of the boundary $V_0$.
\end{definition}

\begin{definition}[$r$-biased infinity Laplacian Boundary Problem]
Let $G$ be a connected graph with boundary $V_0$ and $g: V_0 \to \RR$ a real function. We call $g$ the boundary condition.

A real function $u: V \to \RR$ is a solution of the $r$-biased infinity Laplacian Boundary Problem
\begin{equation}\label{e:bp}
 \begin{matrix}
  \Delta_\infty^r u &= 0 \\
  u_{|V_0} &= g
 \end{matrix}
\end{equation}
if it is an $r$-biased infinity harmonic function on $G$ that agrees with $g$ on the boundary $V_0$.
\end{definition}

The following is a basic example (it plays an essential role in our algorithm).

\begin{example}\label{eg:harmonic}
Consider the graph $G$ which is a path $x_0,\dots,x_n$ of length $n$, with boundary $V_0=\{x_0,x_n\}$ and boundary condition $g(x_0)=m$ and $g(x_n)=M$, where $m \leq M$. The unique solution to the $r$-biased infinity Laplacian Boundary Problem, for $r \neq 1$, is given by
\[
 u(x_i) = A + Br^i, \qquad i = 0,\dots, n,
\]
where
\[
 A = \frac{M-r^n m}{1-r^n}, \qquad B = \frac{m-M}{1-r^n},
\]
and for $r=1$, by
\[
 u(x_i) = m + \frac{(M-m)i}{n}, \qquad i = 0,\dots, n.
\]

Note that, regardless of whether $r\neq 1$ or $r=1$,
\[
 u(x_{i+1}) - u(x_i) = \frac{(M-m)r^i}{1+r+\dots+r^{n-1}},
\]
for $i = 0,\dots,n-1$. Thus the values of $u$ are nondecreasing as we move along the path from $x_0$ to $x_n$. In fact, they are strictly increasing, unless $M=m$,  in which case they are all equal to $m$.
\end{example}

\subsection{Biased tug-of-war on graphs}

Let $G$ be a finite graph with boundary $V_0$ and boundary condition $g:V_0 \to \RR$. We may think of $g$ as a pay-off function in a tug-of-war game on $G$ that is played as follows. A token is placed at a vertex in $V \setminus V_0$. Two players, Player I and Player II flip a coin to decide who makes the next move. The probability that Player I wins is $p$ and the probability that Player II wins is $q$. Each time a player earns the right to move, he moves the token to a neighboring vertex, as he pleases, at which point the coin is tossed again to decide who makes the next move. The game stops when the token reaches a vertex $x$ in $V_0$ at which point Player I wins the amount $g(x)$ from Player II (the game is a zero-sum game). Player I tries to maximize this pay-off and Player II tries to minimize it. The solution $u$ to the Boundary Problem~\eqref{e:bp} is the value of the game, i.e., for every vertex $x$, $u(x)$ is the expected pay-off for Player I under optimal strategy of play by both players when the game starts with the token at $x$. Moreover, the optimal strategies for both players are revealed by the value function. Whenever Player I wins the turn, he needs to move the token to a neighbor with the maximal value and whenever Player II wins the turn, he needs to move to a neighbor with the minimal value (in case of ties for maximum or minimum, one may choose any of the vertices that are tied).

\subsection{Main result}

\begin{theorem}
Let $G$ be a graph with boundary $V_0$ and boundary condition $g:V_0 \to \RR$. There exists an algorithm running in polynomial time in the number of vertices of $G$ that extends $g$ to a solution $u$ of the boundary problem~\eqref{e:bp}.
\end{theorem}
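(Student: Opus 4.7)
The plan is to adapt the Lazarus et al.\ algorithm for the unbiased case by replacing its notion of slope with an $r$-biased analog suggested by Example~\ref{eg:harmonic}. For a path $[x,y]$ of length $\ell=\ell[x,y]$ and real numbers $m \leq M$, define the biased slope
\[
 \sigma_r(m,M;\ell) = \frac{M-m}{1+r+\cdots+r^{\ell-1}}.
\]
By Example~\ref{eg:harmonic} this is precisely the smallest increment of the biased harmonic extension along a path with boundary values $m,M$; constant biased slope along a path thus corresponds to a geometric progression of increments with ratio $r$, which reduces to the linear $1$D solution as $r \to 1$ and so coincides with the notion used in \cite{lazarus-etal:algorithm} in that limit.

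The algorithm then mimics the unbiased outline. First I would establish elementary properties of $\sigma_r$ (strict monotonicity in $\ell$, an additivity-type identity under path concatenation, behavior under restriction to subpaths). The main loop proceeds as follows: while $V \setminus V_0 \neq \emptyset$, scan all pairs $(x,y) \in V_0 \times V_0$ with $g(x) < g(y)$ and all simple paths in $G$ from $x$ to $y$ whose interior vertices lie in $V \setminus V_0$, selecting a pair and a path $P^*$ maximizing $\sigma_r(g(x),g(y);\ell[x,y])$; assign values to the interior vertices of $P^*$ by the explicit formula of Example~\ref{eg:harmonic}; annex these vertices to $V_0$ with their new values as extended boundary data; iterate. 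Because $1+r+\cdots+r^{\ell-1}$ is strictly increasing in $\ell$, the maximizing path for a fixed pair is a shortest path in the residual graph, so each inner step is just a shortest-path computation combined with an $O(|V_0|^2)$ scan. Each iteration assigns values to at least one new vertex, so the total running time is polynomial in $|V|$.

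The heart of the argument, and what I expect to be the main obstacle, is proving correctness: that the values produced by the algorithm actually satisfy the $r$-biased infinity Laplacian equation at every interior vertex of $G$, in which case the uniqueness result from~\cite{peres-p-s:biased} identifies them with the solution of~\eqref{e:bp}. I would establish a biased comparison principle: if $P^*$ realizes the globally maximum biased slope $\sigma_r^*$ among boundary pairs, then in any solution $u$ of~\eqref{e:bp} the values of $u$ along $P^*$ coincide with the biased harmonic extension of Example~\ref{eg:harmonic}. The reason is that at each interior vertex $z$ of $P^*$, the equation $u(z) = p\max_{y\sim z} u(y) + q\min_{y\sim z} u(y)$, combined with the maximality of $\sigma_r^*$, forces the neighbors realizing the max and the min to be the two neighbors of $z$ along $P^*$; otherwise rerouting through a deviating neighbor would produce a path with biased slope strictly greater than $\sigma_r^*$, a contradiction. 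Having fixed the values on $P^*$, one shows that the maximum biased slope in the residual problem does not exceed $\sigma_r^*$, so the invariant persists through the recursion and the algorithm terminates with a globally consistent biased harmonic extension of $g$.
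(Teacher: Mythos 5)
Your definition of the biased slope is not the right one, and the algorithm built on it fails. You take $\sigma_r(m,M;\ell)=\frac{M-m}{1+r+\cdots+r^{\ell-1}}$, the first increment of the one-dimensional harmonic extension. The paper instead uses $\sigma_r g[x,y]=\frac{g(y)-r^{\ell}g(x)}{1+r+\cdots+r^{\ell-1}}$, which differs from yours by the base-point term $(1-r)g(x)$; on a single edge it is $g(y)-r\,g(x)$ rather than $g(y)-g(x)$, and it is exactly this quantity that is constant along a biased harmonic path: the condition $p\max+q\min=u(z)$ at an interior vertex $z$ is equivalent to $M_z-r\,u(z)=u(z)-r\,m_z$, i.e.\ to equality of the incoming and outgoing edge slopes in the paper's sense, not in yours (yours would give the unbiased condition $M_z-u(z)=u(z)-m_z$). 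The missing term matters for two reasons: it penalizes (for $r>1$) candidate paths that start at high boundary values, and only the paper's slope satisfies the convex-combination identity under concatenation (Lemma~\ref{l:convex}) on which your rerouting argument depends. With your slope, writing $D_j=1+\cdots+r^{j-1}$, gluing pieces of lengths $m,k$ and slopes $s_1,s_2$ gives a path of slope $\frac{s_1D_m+s_2D_k}{D_m+r^mD_k}$, which for $r>1$ is strictly less than $s$ even when $s_1=s_2=s>0$; so a deviating neighbor does not yield a path of slope exceeding $\sigma_r^*$, and no contradiction is obtained.

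Concretely, your claimed comparison principle is false for your slope. Take $r=2$ (so $p=1/3$, $q=2/3$) and the tree with boundary leaves $a,b,c,d$ carrying $g(a)=0$, $g(b)=1$, $g(c)=10$, $g(d)=25$, interior vertices $z,w_1,w_2$, and edges $az$, $bz$, $zw_1$, $cw_1$, $w_1w_2$, $w_2d$. The unique solution is $u(z)=5/3$, $u(w_1)=5$, $u(w_2)=35/3$, and the harmonic connection is $a,z,w_1,w_2,d$ with values $0,\,5/3,\,5,\,35/3,\,25$ (increments in geometric ratio $2$). Your slope gives $[c,d]$ the value $15/7\approx2.14$ and $[a,d]$ only $25/15=5/3$, so your algorithm selects $[c,d]$ first and sets $u(w_1)=85/7$, $u(w_2)=115/7$; but the true values of $u$ along $c,w_1,w_2,d$ are $10,\,5,\,35/3,\,25$, not even monotone, so $[c,d]$ is not a harmonic connection and your output violates the equation at $w_1$. (The paper's slope gives $[c,d]$ the value $(25-80)/7<0$ and correctly selects $[a,d]$.) A secondary omission: your loop terminates only when every vertex is reached by some connecting path, but there may be vertices lying on no path between two labelled vertices (e.g.\ pendant subtrees); the paper handles these in a separate final step by assigning the value of the nearest labelled vertex.
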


Let us briefly (and loosely) explain the approach behind the algorithm. A path in $G$ connecting two boundary points is called a connection. Let $u$ be the $r$-biased infinity harmonic function solving the boundary problem for $G$. An $r$-biased infinity harmonic connection in $G$ is a connection path such that the restriction of $u$ on the path is the solution of the boundary problem on the path (with the endpoints as boundary points). A crucial feature of the harmonic function $u$ solving the boundary problem on $G$ is that there is always a harmonic connection (this is not obvious at all, and we do not prove it directly, but it becomes apparent after the solution is constructed by the algorithm). Thus, if we know such a harmonic connection we can immediately determine the $u$ values along that path, by using the formula from Example~\ref{eg:harmonic}. The vertices along the harmonic connection that are assigned values in this way may then be considered part of the boundary and we may continue by looking for a harmonic connection path on the new graph (with larger boundary). We continue in this way until all vertices are assigned values. Thus, the boundary problem can be solved efficiently if we find an efficient way of recognizing harmonic connection from the initial data. The criterion employed to determine such a path is to choose the path of steepest ascent connecting boundary points, where the steepest ascent is interpreted in an appropriate way by defining the notion of $r$-slope along connections.

\subsection{Existence and uniqueness of solutions}

The existence of solutions follows from our algorithm, since we prove that the algorithm always stops and produces a solution. The uniqueness also follows from our results, since we prove that any solution must contain $r$-biased infinity harmonic connections, and the values along such connections are uniquelly determined. 

Our approach produces exact solutions in time polynomial in the size of the input graph. We quickly indicate another approach, based on approximations. 

Assuming $m = \min\{g(x)\mid x \in V_0\}$ and $M = \max\{g(x)\mid x \in V_0\}$, start with an initial function $u_0: V \to \RR$, defined by $u_0(x)=m$, for $x \in V \setminus V_0$ and $u_0(x)=g(x)$, for $x \in V_0$. For $n \geq 0$, define the function $u_{n+1}: V \to \RR$, iteratively, by
\[
 u_{n+1} =
  \begin{cases}
   p \cdot \sup_{y \sim x} \{u_n(y)\} + q \cdot \inf_{y \sim x} \{u_n(y)\}, & x \in V \setminus V_0 \\
   g(x), & x \in V_0
  \end{cases}.
\]
It is easy to see that $u_{n+1}(x)\geq u_n(x)$, for all $n \geq 0$ and all $x \in V$, i.e., the sequence of functions $\{u_n\}_{n=0}^\infty$ is nondecreasing in each coordinate (at each vertex), and since the values are bounded above by $M$, the sequence monotonously converges, from below, to some function $u$, which is an $r$-biased harmonic extension of $g$. Therefore solutions exist and can be approximated to any accuracy by the suggested iteration procedure. The only problem is that we do not know, at present, any good bounds on the number of steps needed to achieve some prescribed accuracy.

Note that if we start the iteration from above, by defining $u_0(x)=M$, for $x \in V \setminus V_0$ and $u_0(x)=g(x)$, for $x \in V_0$, the sequence will also monotonously converge to the solution, but this time from above. In particular, if we simultaneously run the iteration from above and from below, we obtain, at each step, upper and lower bounds for the solution, both of which converge to the solution. In this way we obtain, at each step, both an approximation to the solution and an estimate of the error.

If, at any moment, the gap between the lower and the upper bounds becomes sufficiently narrow to allow us to determine, for each vertex $x \in V \setminus V_0$, which neighbor of $x$ has the minimum and which neighbor has the maximum value, we may stop the approximations and calculate all values exactly, since, once we know the correct directions for the minima and maxima (i.e., we know the optimal strategy for both players in the game) the problem reduces to solving a linear system of equations (one equation for each vertex in $V \setminus V_0$). While this might be rather practical approach that provides the solution quickly in many situations, there are two problems with it in general, one is that we do not know how long does it take for this separation of values to occur and, in some situations, when two or more neighbors have the same maximal or minimal value, the separation may never occur at all.

Nevertheless, if there are only a handful of places where the approximations cannot sufficiently separate the values, we may still efficiently use this approach. Indeed, we may fix the correct directions for the vertices for which the separation works well and try all possibilities for the remaining few cases (solve one linear system for every possible choice of directions).

In fact, we could use the approach based on guessing of the correct directions from the very beginning, without running any approximations or any other preliminary work, and solve one linear system of equations for each choice of directions until we find one that is consistent, but this algorithm is not practical as there are exponentially many choices in general.


\section{The case of non-constant bias}

Before we present the algorithm solving the boundary problem in the case of a constant bias $r$ at all vertices we provide a few comments and examples regarding the general case in which the bias may be different at different vertices. Given probabilities $p_x \in (0,1)$, for $x \in V \setminus V_0$, that depend on the vertices, a biased harmonic function $u:V \to \RR$ is a function $u$ that satisfies
\[
p_x \cdot \max_{y \sim x} \{u(y)\} + q_x \cdot \min_{y \sim x} \{u(y)\} - u(x) = 0,
\]
for $x \in V \setminus V_0$, or, equivalently,
\[
 \max_{y \sim x} \{u(y)\} + r_x \cdot \min_{y \sim x} \{u(y)\} = (1+r_x)u(x),
\]
where $q_x=1-p_x$ and $r_x=q_x/p_x$, for $x \in V\setminus V_0$.

\begin{example}\label{eg:nopath}
In the graph in Figure~\ref{f:nonconsant} the boundary vertices are the vertices $b_0$ and $b_1$ with values 0 and 9, respectively (double square frames are used to indicate these vertices) and there are 4 vertices in $V \setminus V_0$, namely $x$, $y$, $z$, and $w$ (indicated by round frames). The bias is 1 at all vertices ($r_y=r_z=r_w=1$) except at the vertex $x$ at which the bias is $r_x=3$ (probabilities $p_x=1/4$ and $q_x=3/4$). The encircled values at every vertex define a biased harmonic function solving the boundary problem. The arrows do not indicate directions on the edges (recall that the graphs are undirected). Rather, the different line styles used on edge halves indicate the relationship between the endpoints of that edge. For every vertex $x$ in $V \setminus V_0$, there is an outgoing  single arrow ($\rightarrow$) pointing toward the neighbor with maximum value and an outgoing double arrow ($\twoheadrightarrow$) pointing toward the neighbor with minimum value. An edge connecting a vertex to a neighbors whose value is neither minimal nor maximal is indicated by a dotted line and an edge connecting a boundary vertex to a vertex in $V \setminus V_0$ by a full line. In terms of the tug-of-war interpretation each single arrow represents the optimal move for Player I (i.e., Player I should always move to the neighbor in the direction of the single arrow) and each double arrow represents the optimal move for Player II from the given vertex.
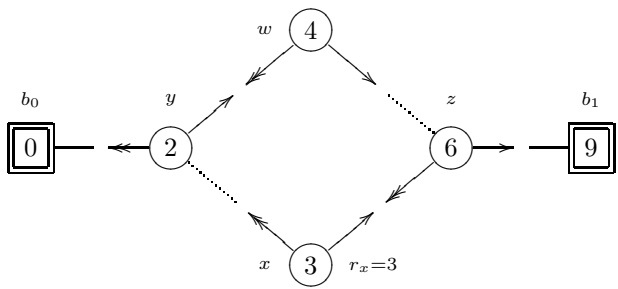
\begin{figure}[!ht]
\[
\xymatrix@R=10pt@C=15pt{
 &&&& *++[o][F]{4} \ar@{->}[dr] \ar@{->>}[dl] \ar@{}[l]|<<<<{w} &&&&
 \\
 &&&& &&&&
 \\
 *++[F=]{0} \ar@{-}[r] \ar@{}[u]|<<<<{b_0} &&
 *++[o][F]{2} \ar@{->>}[l] \ar@{->}[ur] \ar@{..}[dr] \ar@{}[u]|<<<<{y}&&&&
 *++[o][F]{6} \ar@{->}[r] \ar@{->>}[dl] \ar@{..}[ul] \ar@{}[u]|<<<<{z} &&
 *++[F=]{9} \ar@{-}[l] \ar@{}[u]|<<<<{b_1}
 \\
 &&&& &&&&
 \\
 &&&& *++[o][F]{3} \ar@{->}[ur] \ar@{->>}[ul] \ar@{}[r]|>{r_x=3} \ar@{}[l]|<<<<{x} &&&&
}
\]
\caption{A biased harmonic function (under nonconstant bias)}
\label{f:nonconsant}
\end{figure}

It is easy to see that there are no biased infinity harmonic connections between the boundary vertices $b_0$ and $b_1$ (one cannot go from $b_0$ to $b_1$ without using a partially dotted edge). This means that the approach that relies on looking for such paths, which leads to a polynomial time algorithm in the case of constant bias, cannot be used in the case of nonconstant bias.
\end{example}

In contrast with the general case, we show that harmonic connections do exist even in the case of nonconstant bias as long as the underlying graph is a tree. Thus, at least in the case of trees the approach based on finding harmonic connections paths may still work, but we do not have a quick criterion at present to determine such connections from the initial data.

Note that, in the case of trees, we may always assume that the boundary is exactly the set of leaves. Indeed, any boundary problem on a tree in which the boundary is not the set of leaves can be reduced to several separate boundary problems in which all boundary points are leaves, by detaching the tree at all interior boundary vertices. Further, the boundary problem on a tree in which not all leaves are boundary points can be reduced to a problem on a smaller tree by observing that a leave that is not a boundary point must have the same value as its only neighbor and that the value of the neighbor is not affected by the removal of the leaf. Thus, after a few reduction of the above type (detachment at interior boundary vertices and removal of non-boundary leaves) we arrive at several separate problems in each of which the boundary is precisely the set of leaves.

\begin{proposition}
Let $G$ be a tree and let the boundary $V_0$ be the set of its leaves. For any boundary condition $g$ and arbitrary, not necessarily constant, biases at the interior vertices of the tree, there exists a biased infinity harmonic connection path in $G$.
\end{proposition}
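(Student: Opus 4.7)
The plan is to induct on the number of vertices of $T$, reducing to smaller trees by pruning subtrees that hang from ``non-extremal'' neighbors, and handling the base case by a direct walk argument. Let $u$ be the unique biased infinity harmonic extension of $g$ on $T$, whose existence is guaranteed by the monotone iteration scheme already described. For an interior vertex $v$, call a neighbor $w$ \emph{extremal at $v$} if $u(w)$ equals $\max_{y\sim v}u(y)$ or $\min_{y\sim v}u(y)$, and \emph{non-extremal at $v$} otherwise. A leaf-to-leaf path $x_0,\dots,x_n$ in $T$ is a harmonic connection precisely when at every interior $x_i$ one of $x_{i-1},x_{i+1}$ realizes $\max_{y\sim x_i}u(y)$ and the other $\min_{y\sim x_i}u(y)$.

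For the inductive step, suppose some interior vertex $v$ has a non-extremal neighbor $w$. Let $T_w$ be the component of $T\setminus\{v\}$ containing $w$ and set $T'=T\setminus T_w$. I would first note that $v$ must have degree at least three in $T$: since $u(w)$ lies strictly between the max and the min at $v$, both extremes are attained at neighbors of $v$ other than $w$, and these two neighbors are distinct (otherwise their common value would equal $u(w)$). Consequently $v$ is still interior in $T'$. Removing $T_w$ changes neither the max nor the min at $v$ and leaves every other neighborhood untouched, so $u|_{T'}$ is the unique biased harmonic extension of the restricted boundary data on $T'$. By the induction hypothesis $T'$ admits a harmonic connection, and viewed back in $T$ this remains a harmonic connection because the max/min neighborhoods at its interior vertices agree in $T$ and $T'$.

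In the base case no interior vertex has any non-extremal neighbor. I would then construct the harmonic connection as a walk: start at any leaf $b$ of $T$, which is automatically extremal at its unique neighbor $x_1$; having reached an interior vertex $x_i$ from a neighbor $y$ that is extremal at $x_i$, choose $x_{i+1}$ to be any neighbor of $x_i$ realizing the opposite extreme of $u$, taken distinct from $y$. Such a choice exists because in the base case every neighbor of $x_i$ is extremal at $x_i$; and if $\max_{z\sim x_i}u(z)=\min_{z\sim x_i}u(z)$ then every neighbor of $x_i$ is tied to $u(x_i)$, so any neighbor different from $y$ is an acceptable exit. The walk is non-backtracking by construction, hence a simple path in the tree $T$, and by finiteness it must terminate at another leaf, producing the desired harmonic connection.

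The main obstacle I anticipate is the careful treatment of ties in the base case: when several neighbors of an interior vertex tie for the same extremal value of $u$, one must verify that the opposite extreme in the walk can always be chosen so as to keep the walk non-backtracking. A short case analysis, or alternatively a small generic perturbation of $g$ followed by a limiting argument, should resolve this.
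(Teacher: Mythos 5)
Your proof is correct, and its core mechanism is the same as the paper's: prune subtrees hanging off a branch vertex, using the fact that a branch not containing a maximal or minimal neighbor can be deleted without disturbing harmonicity of $u$ on what remains. The difference is in how aggressively you prune. The paper, at each interior vertex of degree at least $3$, keeps exactly one branch containing a maximal neighbor and one containing a minimal neighbor and discards all the others; the reduction therefore terminates at a path, which is immediately the desired harmonic connection. You delete only branches through non-extremal neighbors, so your terminal object is a tree in which every neighbor of every interior vertex is extremal, and you then need the additional greedy non-backtracking walk (alternating between a max-neighbor and a min-neighbor) to extract the connection. That walk is sound, including in the tie case $\max=\min$, which your sketch already resolves correctly: all neighbors then share the value $u(x_i)$, so any exit other than the entry vertex works, and no perturbation or limiting argument is needed. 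The paper's variant avoids the walk and the tie bookkeeping entirely, at the cost of a more arbitrary choice of which branches to retain; yours isolates more sharply the structural fact that only the non-extremal branches are genuinely irrelevant to the connection. (One cosmetic point: your opening characterization of harmonic connections via ``precisely when'' is really only the sufficient direction of the paper's definition, but sufficiency is all your argument uses.)
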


\begin{proof}
Let $u$ be the solution to the boundary problem. Let $x_0$ be any interior vertex of degree at least 3, and let its neighbors be $x_1,\dots,x_k$. Moreover, let $x_1$ and $x_2$ be vertices of minimal and maximal value, respectively, among the neighbors of $x_0$. The removal of the vertex $x_0$, along with the edges incident to it, leads to $k$ connected components $T_1,\dots,T_k$, with representatives $x_1,\dots,x_k$, each of which is a tree. The induced subgraph $G_{x_0}=T_1 \cup T_2 \cup \{x_0\}$ is a tree that has fewer vertices than the original tree and all leaves of $G_{x_0}$ are part of the original boundary $V_0$. Moreover, all vertices in $G_{x_0}$ have the same neighbors as in $G$, except for $x_0$, which now has degree 2, but its two remaining neighbors in $G_{x_0}$ have the minimum and the maximum value among all neighbors of $x_0$ in $G$. We may continue this procedure (choose a vertex $x_0'$ in $G_{x_0}$ of degree 3 or higher and remove all but two subtrees that emanate from $x_0'$, but make sure to keep the two subtrees that include two vertices with the minimum and maximum values among the neighbors of $x_0'$, and so on) until we obtain a tree in which all interior vertices have degree 2. Since the only tree in which all interior vertices have degree 2 is a path, we obtain a connection between two leaves of the tree $G$. Moreover, the obtained path is a harmonic connection, since each interior vertex $x$ on this path has two of its original neighbors next to it, and these two neighbors have the minimal and the maximal possible value among all neighbors of $x$ in $G$.
\end{proof}

\begin{remark}
We note that each instance of the biased infinity Laplacian boundary problem with nonconstant bias is equivalent to such a problem on a larger graph with only two distinct boundary values (thus, in a sense, the problem with only two distinct boundary values has the same degree of difficulty as the problem with arbitrary number of distinct boundary values).

Indeed, choose two values $m$ and $M$ that are strictly smaller and strictly greater, respectively, than the boundary values $g(x)$, for $x \in V_0$. Construct a new graph $G'$ by adding, for each boundary vertex $x$, two vertices $x_m$ and $x_M$ and two edges connecting these two vertices to $x$, declaring the new boundary to be $V_0'=\{x_m,x_M \mid x \in V_0\}$, assigning the boundary values $g'(x_m)=m$ and $g'(x_M)=M$, for $x \in V_0$, and placing the bias $r_x = \frac{M-g(x)}{g(x)-m}$ at vertex $x$, for $x \in V_0$. The solution to the boundary problem on the new graph $G'$ with boundary $V_0'$ and boundary condition $g'$, restricted to $V$, is the solution to the original problem. Indeed, since the only boundary values in $G'$ are $m$ and $M$, the optimal strategy at each vertex $x$ in the old boundary $V_0$ (note that no vertex in the old boundary is part of the new boundary) is clear. Namely, if given the right of turn, Player I moves to $x_M$ (since there is no higher possible payoff than $M$), while Player II moves to $x_m$. Therefore, the value at $x$ of the biased harmonic function $u'$ solving the boundary problem on $G'$ is
\[
 u'(x) = \frac{r_x}{r_x+1} \cdot m + \frac{1}{r_x+1} \cdot M = \frac{M-g(x)}{M-m}\cdot m + \frac{g(x)-m}{M-m} M = g(x),
\]
which implies that $u=u'|_V$, the restriction of $u'$ to $V$, solves the boundary problem on $G$.
\end{remark}


\section{Biased slope on paths in graphs}

The notion of $r$-biased slope extends the notion of slope used in the unbiased case in~\cite{lazarus-etal:algorithm}.

\begin{definition}
Let $G$ be a graph, $g:V' \to \RR$ a real function defined on a subset $V' \subseteq V$ of vertices, and $[x,y]$ a path in $G$ that starts and ends in $V'$.

The $r$-biased slope (or $r$-slope) of $g$ on $[x,y]$, denoted $\sigma_r g[x,y]$, is
\[
 \sigma_r g[x,y] = \frac{g(y)-r^{\ell[x,y]} g(x)}{1+r+\dots+r^{\ell[x,y]-1}}.
\]
\end{definition}

\begin{remark}
Note that the value of the $r$-slope on the path $[x,y]$ depends only on the values of the function on the endpoints $x$ and $y$ and the length of the path. The function $g$ does not need to be defined on the intermediate vertices.

In the special case $r=1$ (corresponding to the unbiased case $p=q=1/2$), the slope can be rewritten as
\[
 \sigma_1 g[x,y] = \frac{g(y)-g(x)}{\ell[x,y]},
\]
which agrees with the definition of slope used in~\cite{lazarus-etal:algorithm} for the unbiased case.
\end{remark}

\begin{remark}
When $r \neq 1$, the slope can be rewritten in a more compact way as
\begin{equation}\label{e:compact}
 \sigma_r g[x,y] = \frac{g(y) - r^{\ell[x,y]} g(x)}{1-r^{\ell[x,y]}}(1-r).
\end{equation}
The expression on the right hand side tends to $\sigma_1 g[x,y]$ as $r$ tends to 1.

The form~\eqref{e:compact} may be more useful when the $r$-slope is defined on paths in metric spaces in which the lengths may not be integers. For instance, for functions on $\RR$ one may define $r$-derivatives as $\displaystyle{\lim_{h \to 0}} \frac{g(x+h)-r^hg(x)}{1-r^h}(1-r)$.
\end{remark}

The following lemma, which we give without a proof (part (i) is a simple and direct calculation, and part (ii) is an easy corollary), shows that the $r$-slope on a path $[x,y]$ is a convex linear combination of the slopes on the pieces that constitute $[x,y]$. Note that, if $r>1$ the $r$-slopes on the edges near the beginning of the path are assigned larger weights (the biased slope favors the beginning of the path), and if $r<1$ the $r$-slopes on the edges near the end of the path are assigned larger weights (the biased slope favors the end of the path).

\begin{lemma}\label{l:convex}
Let $G$ be a graph and $g:V' \to \RR$ a real function defined on a subset $V' \subseteq V$ of vertices.

\textup{(i)}
If $x,y,z$ are vertices in $V'$, $[x,y]$ is a path in $G$, $z$ is a vertex on the path $[x,y]$, $\ell[x,z]=m$ and $\ell[z,y]=k$, then
\[
 \sigma_r g[x,y] = \frac{r^k+r^{k+1}+\dots+r^{k+m-1}}{1+r+\dots+r^{m+k-1}} \sigma_r g[x,z] +
                   \frac{1+r+\dots+r^{k-1}}{1+r+\dots+r^{m+k-1}} \sigma_r g[z,y].
\]

\textup{(ii)}
If $x_0,x_1,\dots,x_n$ is a path in $G$ consisting of vertices in $V'$, then
\[
 \sigma_r g[x_0,x_n] = \sum_{i=1}^n \frac{r^{n-i}}{1+r+\dots+r^{n-1}} \sigma_r g[x_{i-1},x_i].
\]
\end{lemma}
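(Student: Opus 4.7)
The plan is to prove (i) by direct substitution into the definition of $r$-slope and verify the algebraic identity, and then deduce (ii) by induction on the length of the path using (i) as the basic splitting rule.

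For (i), I would write out the three $r$-slopes using the definition, noting that $\ell[x,y]=m+k$. The coefficient $r^k+r^{k+1}+\dots+r^{k+m-1}$ factors as $r^k(1+r+\dots+r^{m-1})$, and the factor $1+r+\dots+r^{m-1}$ cancels the denominator of $\sigma_r g[x,z]$; similarly, $1+r+\dots+r^{k-1}$ cancels the denominator of $\sigma_r g[z,y]$. After combining over the common denominator $1+r+\dots+r^{m+k-1}$, the numerator becomes
\[
 r^k\bigl(g(z)-r^m g(x)\bigr) + \bigl(g(y)-r^k g(z)\bigr) = g(y)-r^{m+k}g(x),
\]
which is exactly the numerator of $\sigma_r g[x,y]$. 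The same bookkeeping shows the two weights sum to $1$, since $r^k(1+r+\dots+r^{m-1}) + (1+r+\dots+r^{k-1}) = 1+r+\dots+r^{m+k-1}$, justifying the convex combination claim in the remark preceding the lemma.

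For (ii), I would induct on $n$, with the base case $n=1$ trivial. For the inductive step, apply (i) to the path $x_0,x_1,\dots,x_n$ with splitting vertex $z=x_1$ (so $m=1$, $k=n-1$), obtaining
\[
 \sigma_r g[x_0,x_n] = \frac{r^{n-1}}{1+r+\dots+r^{n-1}}\,\sigma_r g[x_0,x_1] + \frac{1+r+\dots+r^{n-2}}{1+r+\dots+r^{n-1}}\,\sigma_r g[x_1,x_n].
\]
Applying the induction hypothesis to the subpath $x_1,\dots,x_n$ expands $\sigma_r g[x_1,x_n]$ as a sum over $i=1,\dots,n-1$ with denominator $1+r+\dots+r^{n-2}$, which cancels against the outer coefficient. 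Reindexing via $j=i+1$ turns the weights into $r^{n-j}/(1+r+\dots+r^{n-1})$ for $j=2,\dots,n$, and the isolated first term is exactly the missing $j=1$ contribution with weight $r^{n-1}/(1+r+\dots+r^{n-1})$. Collecting gives the required formula.

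There is no genuine obstacle here, which is consistent with the authors' own description of (i) as a direct calculation and (ii) as an easy corollary. The only place demanding care is the factorization $r^k+\dots+r^{k+m-1}=r^k(1+r+\dots+r^{m-1})$ that makes the cancellation in (i) transparent, and the reindexing step in the induction for (ii); everything else is routine manipulation of finite geometric sums.
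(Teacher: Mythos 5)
Your proposal is correct and matches the paper's intent exactly: the paper states this lemma without proof, describing part (i) as a simple direct calculation and part (ii) as an easy corollary, which is precisely what you carry out (the telescoping of $r^k\bigl(g(z)-r^m g(x)\bigr)+\bigl(g(y)-r^k g(z)\bigr)$ for (i), and the induction with splitting vertex $x_1$ for (ii) both check out). No gaps.
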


\begin{remark}\label{r:increasing}
We compare the $r$-slope on a path $[x,y]$ and its reversal $[y,x]$. Assume that $g(y) \geq g(x)$. In that case we say that the $r$-slope $\sigma_r g[x,y]$ is taken in the increasing direction and the $r$-slope $\sigma_r g[y,x]$ is taken in the decreasing direction. Note that the slope may be negative even when taken in the increasing direction, and it may be nonzero even when the $g$ values on the endpoints $x$ and $y$ are equal. This seemingly unusual behavior becomes more intuitively acceptable when one realizes that, loosely speaking, the $r$-slope ``compares'' the function $g$ to the exponential function with base $r$ along the paths and not to the constant function as the usual slope does. In particular, when $g$ is ``small`` compared to $r$ all slopes in the graph may be negative (for instance, take $g$ to be constant 1 and $r>1$)

The difference of the $r$-slopes on $[x,y]$ of length $n$ and its reversal $[y,x]$ is
\[
 \sigma_r g[x,y] - \sigma_r g[y,x] = \frac{(g(y)-g(x))(1+r^n)}{1+r+\dots+r^{n-1}},
\]
which shows that
\[
 \sigma_r g[x,y] > \sigma_r g[y,x] \qquad \text{if and only if} \qquad g(y) > g(x)
\]
and
\[
 \sigma_r g[x,y] = \sigma_r [y,x] \qquad \text{if and only if} \qquad g(y) = g(x).
\]
In other words, the $r$-slope taken in the increasing direction is greater than or equal to the $r$-slope taken in the decreasing direction (as one expects from a slope), and the equality of these two $r$-slopes is achieved if and only if the $g$ values at the two ends of the path are the same.
\end{remark}

\begin{remark}\label{r:short-steep}
The $r$-slope on a path $[x,y]$ of length $n$ can be rewritten as
\[
 \sigma_r g[x,y] = \frac{g(y)-g(x)}{1+r+\dots+r^{n-1}} +(1-r)g(x).
\]
Therefore, if
\[
 g(x)=g(x') \leq g(y) = g(y'),
\]
then
\[
 \sigma_r g[x,y] < \sigma_r g[x',y'] \qquad \text{if and only if} \qquad \ell[x',y'] < \ell[x,y].
\]
In other words, when the same increase in the value of $g$ is achieved on a shorter path, the $r$-slope is greater (as one expects from a slope).
\end{remark}

\begin{remark}\label{r:reciprocal}
The $r$-slope and the $(1/r)$-slope are related by
\[
 \sigma_r g[x,y] = - r \sigma_{1/r} g[y,x].
\]
\end{remark}

We show that, in the case of paths, the behavior of the $r$-slope on its edges characterizes the $r$-biased infinity harmonic functions.

\begin{proposition}\label{p:constant}
Let $G$ be the graph which is a path $x_0,x_1,\dots,x_n$ of length $n$, $u: V \to \RR$ be a real valued function and $u(x_0) = m \leq M = u(x_n)$. The following are equivalent.

(i) All edge $r$-slopes $\sigma_r u[x_i,x_{i+1}]$, for $i=0,\dots,n-1$, are equal.

(ii) The function $u$ is $r$-biased infinity harmonic function with respect to the boundary $V_0=\{x_0,x_n\}$.
\end{proposition}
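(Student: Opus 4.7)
The plan is to reduce the equivalence to a short algebraic identity once monotonicity of $u$ along the path has been established under each hypothesis. On an edge of length one the $r$-slope collapses to
\[ \sigma_r u[x_i,x_{i+1}] \;=\; u(x_{i+1}) - r\,u(x_i), \]
so equality of two consecutive edge slopes is the three-term linear relation
\[ (1+r)\,u(x_i) \;=\; u(x_{i+1}) + r\,u(x_{i-1}), \]
or, dividing by $1+r$ and using $p = 1/(1+r)$, $q = r/(1+r)$,
\[ u(x_i) \;=\; p\,u(x_{i+1}) + q\,u(x_{i-1}). \]
Provided $u$ is nondecreasing along the path, so that $u(x_{i+1}) = \max_{y \sim x_i} u(y)$ and $u(x_{i-1}) = \min_{y \sim x_i} u(y)$, this last identity is exactly the $r$-biased infinity harmonic equation at $x_i$. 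Thus the whole proposition reduces to showing that each of (i) and (ii) forces $u$ to be nondecreasing from $x_0$ to $x_n$.

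For the direction (i) $\Rightarrow$ (ii), I would observe that the equal-slope relation above is equivalent to
\[ u(x_{i+2}) - u(x_{i+1}) \;=\; r\,\bigl(u(x_{i+1}) - u(x_i)\bigr), \]
so consecutive increments scale by the positive factor $r$ and hence all share a common sign. Their sum telescopes to $u(x_n) - u(x_0) = M - m \geq 0$, so every increment is nonnegative and $u$ is monotone nondecreasing. The algebraic identification above then yields harmonicity at every interior vertex.

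For the direction (ii) $\Rightarrow$ (i), the harmonic condition expresses $u(x_i)$ as a strict convex combination of $u(x_{i-1})$ and $u(x_{i+1})$ (since $p,q>0$), so $u(x_i)$ always lies in the closed interval spanned by the two neighbor values. This immediately rules out any strict interior local maximum or minimum. Moreover, ties propagate into plateaus: if $u(x_{k-1}) = u(x_k)$ at an interior vertex $x_k$, then the harmonic equation forces $u(x_{k+1}) = u(x_k)$ as well, and by iteration the plateau extends to both endpoints of the path. Consequently $u$ cannot exhibit an up-then-down or down-then-up oscillation; combined with $u(x_0)\leq u(x_n)$ this forces $u$ to be monotone nondecreasing, and the algebraic equivalence above then delivers equal consecutive edge $r$-slopes.

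The main obstacle is the monotonicity step in direction (ii): one has to ensure that ties among neighbor values, where the strict-peak argument fails, propagate into full plateaus rather than allowing oscillations. The remainder is a direct algebraic translation between equal consecutive edge $r$-slopes and the $r$-biased harmonic difference equation at an interior vertex of the path.
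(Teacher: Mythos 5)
Your proof is correct. The direction (i) $\Rightarrow$ (ii) is essentially identical to the paper's: equal consecutive edge slopes give the three-term relation, the increments scale by the positive factor $r$ and so share a sign, and telescoping against $M-m\geq 0$ yields monotonicity, after which the relation is exactly the harmonic equation. Where you diverge is in (ii) $\Rightarrow$ (i): the paper simply invokes Example~\ref{eg:harmonic}, i.e.\ the closed-form expression $u(x_i)=A+Br^i$ for the (asserted unique) solution on a path, and computes $\sigma_r u[x_i,x_{i+1}]=A(1-r)$ directly. You instead establish monotonicity intrinsically, via a discrete maximum principle --- $u(x_i)$ is a strict convex combination of its two neighbors, so no strict interior extremum exists, and ties propagate to a global plateau --- and then run the algebraic identification backwards. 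Both arguments are valid; yours has the advantage of being self-contained (it does not lean on the uniqueness claim of Example~\ref{eg:harmonic}, and in effect it reproves that uniqueness on a path), at the cost of the extra case analysis around plateaus, which you correctly identify and handle as the delicate point.
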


\begin{proof}
Assume (i). We have, for $i=1,\dots,n-1$,
\begin{equation}\label{e:difference}
 u(x_{i+1}) - r u(x_i) =  u(x_{i}) - r u(x_{i-1}).
\end{equation}
After multiplication by $p$ the last equality may be rewritten as
\[
 pu(x_{i+1}) + qu(x_{i-1}) - u(x_i)= 0,
\]
which shows that $u$ is $r$-biased infinity harmonic with respect to the boundary $V_0=\{x_0,x_n\}$, provided the sequence of values $u(x_0),u(x_1),\dots,u(x_n)$ is nondecreasing. However, the equality~\eqref{e:difference} can also be rewritten as
\[
 u(x_{i+1}) - u(x_i) =  r(u(x_{i}) - u(x_{i-1})),
\]
which shows that all differences $u(x_{i+1}) - u(x_i)$ have the same sign and the sequence is either strictly increasing, strictly decreasing, or constant. Since $m \leq M$, the sequence is either strictly increasing (if $m<M$) or constant (if $m=M$). Therefore (i) implies (ii).

Assume (ii). From Example~\ref{eg:harmonic} we have, in case $r \neq 1$,
\[
 \sigma_r u[x_i,x_{i+1}] = u(x_{i+1}) - r u(x_i) = A(1-r),
\]
for $i=0,\dots,n-1$, which implies that
\begin{equation}\label{e:constant}
 \sigma_r u[x_i,x_{i+1}] = \frac{M-r^n m}{1+r+\dots+r^{n-1}} = \sigma_r u[x_0,x_n],
\end{equation}
and the $r$-slope is constant on all edges and equal to the $r$-slope on the path taken in the increasing direction. The equalities in~\eqref{e:constant} and the same conclusion are also valid when $r=1$ (indeed, $\sigma_r u[x_i,x_{i+1}] = u(x_{i+1})-u(x_i)=\frac{M-m}{n}$ in this case). Therefore (ii) implies (i).
\end{proof}

\begin{remark}
Note that if the $r$-slopes of a function $u$ on a path $G$ taken in the direction compatible with the direction from $x_0$ towards $x_n$ are constant on all edges, then either $u$ is $r$-biased infinity harmonic (when $u(x_0) \leq u(x_n)$, as in Proposition~\ref{p:constant}) or $(1/r)$-biased infinity harmonic (when $u(x_0) \geq u(x_n)$). Indeed, in the latter case, by Remark~\ref{r:reciprocal}, the $(1/r)$-slopes taken in the direction compatible with the direction from $x_n$ towards $x_0$ are constant on all edges, and Proposition~\ref{p:constant} applies.
\end{remark}


\section{The algorithm}\label{s:algorithm}

We will construct the required solution to the Boundary Problem~\eqref{e:bp} through an increasing series of compatible partial extensions of $G_0=(V_0,\emptyset,g_0)$ with $g_0=g$, namely $G_0,G_1,G_2,\dots,G_N$, where each $G_i=(V_i,E_i,g_i)$, for $i=0,\dots,N$, is a graph together with a real valued function $g_i: V_i \to \RR$, the last graph $G_N$ has $V_N=V$ and $E_N=E$, the increasing property means that $V_i \subseteq V_{i+1}$ and $E_{i} \subseteq E_{i+1}$, for $i=0,\dots,N-1$, and the compatibility condition means that, for $0 \leq i \leq j \leq N$ and all $x$ in $V_i$, $g_j(x) = g_i(x)$.

\begin{definition}
Let $G_*$ be a partial extension of $G_0$.

A connecting path for $G_*$ is a path
\[ x_0,e_1,x_1,\dots,x_{n-1},e_n,x_n \]
in $G$, such that
\begin{itemize}
\item[-] the vertices $x_0$ and $x_n$ are distinct vertices in $V_*$,

\item[-] the vertices $x_1,\dots,x_{n-1}$ are distinct vertices in $V \setminus V_*$, and

\item[-] the edges $e_1,\dots,e_n$ are in $E \setminus E_*$.
\end{itemize}
\end{definition}

Note that the difference between connections, as defined before, and connecting paths is technical, the former are defined only for $G$, while the latter for partial extensions.

We are ready to describe the algorithm. The input is the graph $G=(V,E)$, the boundary $V_0 \subseteq V$, and the boundary condition $g: V_0 \to \RR$. The output is the solution $u: V \to \RR$ to the $r$-biased infinity Laplacian Boundary Problem~\eqref{e:bp}.

\begin{description}
\item[Step 1] Set $G_*$ to be the graph $G_0$.

\item[Step 2] Determine a connecting path $x_0,e_1,x_1,\dots,e_n,x_n$ for $G_*$ with largest possible $r$-slope $\sigma_r [x_0,x_n]$.  If there are no connecting paths to be found, skip to Step 5, otherwise continue to Step 3.

\item[Step 3] Set $V_{**} = V_{*} \cup \{x_1,\dots,x_{n-1}\}$, $E_{**}=E_* \cup \{e_1,\dots,e_n\}$, and $g_{**}(x)=g_*(x)$, for $x \in V_*$. If $r \neq 1$, set $g_{**}(x_i) = A+Br^i$, for $i=1,\dots,n-1$, where $A=\frac{g_*(x_n)-r^n g_*(x_0)}{1-r^n}$ and $B = \frac{g_*(x_0)-g_*(x_n)}{1-r^n}$. Otherwise set $g_{**}(x_i) = m+\frac{g_*(x_n)-g_*(x_0)}{n}$, for $i=1,\dots,n-1$. Set $G_{**}$ to be the graph with vertex set $V_{**}$ and edge set $E_{**}$ along with the function $g_{**}$.

\item[Step 4] Set $G_*$ equal to $G_{**}$ and go back to Step 2.

\item[Step 5] If $V_* \neq V$ go to Step 6. Otherwise set $u=g_*$ and stop.

\item[Step 6] Set $V_{**}=V$, $E_{**}=E$, and for every vertex $x$ in $V_{**} \setminus V_*$, determine the vertex $x_*$ from $V_*$ that is closest to $x$ and set $g_{**}(x)=g_*(x_*)$. Set $u=g_{**}$ and stop.
\end{description}

We will justify the correctness of the algorithm through a series of claims, beginning with the following key fact.

\begin{claim}\label{c:decreasing-slopes}
The sequence of connecting paths, in the order in which they are selected by the algorithm, has nonincreasing $r$-slopes.
\end{claim}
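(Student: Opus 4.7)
The plan is to argue by contradiction: assume $s_{k+1} > s_k$, where $s_i$ denotes the $r$-slope of the $i$-th selected connecting path $P_i$. Writing $P_k$ as $x_0,\dots,x_n$ with $g_*(x_0) \le g_*(x_n)$, the absorption of $P_k$ enlarges $V_*$ by $\{x_1,\dots,x_{n-1}\}$ and $E_*$ by the edges of $P_k$, producing $G_{**}$. Because the interior vertices of $P_{k+1}$ lie in $V \setminus V_{**} \subseteq V \setminus V_*$ and its edges in $E \setminus E_{**} \subseteq E \setminus E_*$, the path $P_{k+1}$ can fail to be a connecting path for $G_*$ only through its endpoints, which must then lie in $V_{**} \setminus V_* = \{x_1,\dots,x_{n-1}\}$. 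If neither endpoint does, then $P_{k+1}$ itself is an admissible connecting path for $G_*$ with slope $s_{k+1} > s_k$, already contradicting the maximality of $s_k$ at step $k$.

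In the remaining case, each endpoint of $P_{k+1}$ of the form $x_i$ is spliced by attaching either the $P_k$-prefix $x_0,\dots,x_i$ or the $P_k$-suffix $x_i,\dots,x_n$; the choice is made so that the other endpoint of $P_{k+1}$ is not reused and so that the attached piece is traversed in its increasing direction. The result is a simple connecting path $Q$ for $G_*$ whose two endpoints lie in $V_*$. By Proposition~\ref{p:constant} together with Lemma~\ref{l:convex}(ii), every subpath of $P_k$ has $r$-slope exactly $s_k$. Applying Lemma~\ref{l:convex}(i) once or twice at the splice points, with $Q$ oriented so that the $P_{k+1}$-portion is taken in $P_{k+1}$'s maximizing direction, expresses $\sigma_r g_*[Q]$ as a convex combination of $s_k$ (from the $P_k$-pieces) and $s_{k+1}$ (from $P_{k+1}$) with strictly positive weight on $s_{k+1}$. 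Hence $\sigma_r g_*[Q] > s_k$, contradicting the maximality of $s_k$ over connecting paths for $G_*$.

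The main obstacle is the bookkeeping in the splicing step: ensuring that the orientation of the attached $P_k$-piece is compatible with that of $P_{k+1}$ and that the resulting path is simple. The key input is the strict monotonicity of $g_{**}$ along $P_k$ given in Example~\ref{eg:harmonic}, which determines which side of $P_k$ to attach based on the $g_{**}$-comparison of $x_i$ with the other endpoint of $P_{k+1}$. In the corner configurations where the indicated side would reuse the other endpoint (for example, $P_{k+1}$ runs between $x_i$ and $x_0$ with orientation $x_i \to x_0$), the same monotonicity forces $g_*(x_0) = g_*(x_n)$, so $g_{**}$ is constant on $P_k$; a direct computation with the slope formula then yields $s_{k+1} = s_k$, contradicting the strict inequality $s_{k+1} > s_k$.
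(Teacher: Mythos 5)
Your proof is correct and follows essentially the same strategy as the paper's: argue by contradiction, split into cases according to whether the endpoints of the offending path lie in $V_*$ or among the newly added interior vertices of $P_k$, splice on prefixes/suffixes of $P_k$ (all of whose subpaths have slope $s_k$ by Proposition~\ref{p:constant} and Lemma~\ref{l:convex}), and derive a connecting path for $G_*$ of slope exceeding $s_k$ via the convex-combination property. The only notable differences are that in the case where both endpoints are interior to $P_k$ the paper invokes Remark~\ref{r:short-steep} (same endpoints, strictly shorter path, hence strictly larger slope) while you apply the convex-combination argument uniformly, and that you explicitly dispatch the degenerate configuration in which splicing would revisit an endpoint of $P_k$ (forcing $g_*$ constant on $P_k$ and hence $s_{k+1}=s_k$), a corner the paper leaves implicit; both are valid.
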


\begin{proof}
It is sufficient to show that, after a connecting path $[x_0,x_n]=x_0,x_1,\dots,x_n$ for $G_*$ with largest possible $r$-slope, say equal to $s$, is selected in Step 2 and the graph $G_{**}$ is constructed in Step 3, there are no connecting paths for $G_{**}$ of $r$-slope strictly greater than $s$. We prove this by contradiction.

Assume that $[y_0,y_{n'}]=y_0,y_1,\dots,y_{n'}$ is a connecting path for $G_{**}$ with largest possible $r$-slope, say equal to $t$, with $t>s$. By Remark~\ref{r:increasing}, the $r$-slopes $s$ and $t$ are both taken in the increasing direction (otherwise they would not be the largest possible slopes). Therefore $g_*(x_0) \leq g_*(x_n)$ and $g_{**}(y_0) \leq g_{**}(y_{n'})$. Moreover, by the definition of $g_{**}$, Proposition~\ref{p:constant}, and Lemma~\ref{l:convex}, the $r$-slope on every edge along $[x_0,x_n]$ is $s$, and so is the $r$-slope on every subpath $[x_j,x_j]=x_i,x_{i+1},\dots,x_j$, for $0 \leq i < j \leq n$.

We consider several cases, each of which leads to a contradiction.

Assume both $y_0$ and $y_{n'}$ are in $V_*$. In that case $[y_0,y_{n'}]$ is a connecting path for $G_*$ with $r$-slope larger than $s$, a contradiction with the choice of $[x_0,x_{n}]$.

Assume both $y_0$ and $y_{n'}$ are in $V_{**} \setminus V_*$, say $y_0=x_i$ and $y_{n'}=x_j$, for some $1 \leq i,j\leq n-1$. Since the $g_{**}$ values are nondecreasing along the path $x_0,x_1,\dots,x_n$, we must have that $i < j$. The $r$-slope on $[x_i,x_j]=x_i,x_{i+1},\dots,x_j$ is $s$. By Remark~\ref{r:short-steep}, since $[y_0,y_{n'}]$ and $[x_i,x_j]$ have the same endpoints and the $r$-slope on $[y_0,y_{n'}]$ is strictly larger, the path $[y_0,y_{n'}]$ is strictly shorter than $[x_i,x_j]$. But then the path $x_0,\dots,x_i,y_1,\dots,y_{n'-1},x_j,\dots,x_n$ is a connecting path for $G_*$ that is strictly shorter than $[x_0,x_n]$ and it has the same endpoints, which implies that it has strictly larger $r$-slope than $[x_0,x_n]$, a contradiction with the choice of $[x_0,x_{n}]$.

Assume $y_0$ is in $V_*$ and $y_{n'}$ is in $V_{**} \setminus V_*$, say $y_{n'}=x_j$, for some $j=1,\dots,n-1$. Then $y_0,y_1,\dots,y_{n'-1},x_j,x_{j+1},\dots,x_n$ is a connecting path for $G_*$ whose $r$-slope, By Lemma~\ref{l:convex}, is a convex linear combination of $t$ (the $r$-slope on the part of the path up to $x_j$) and $s$ (the $r$-slope on the part of the path after $x_j$). Therefore this path is a connecting path for $G_*$ with $r$-slope strictly larger than $s$, a contradiction with the choice of $[x_0,x_{n}]$.

Finally, assume $y_0$ is in $V_{**}\setminus V_*$, say $y_{0}=x_i$, for some $i=1,\dots,n-1$, and $y_{n'}$ is in $V_*$. Then $x_0,x_1,\dots,x_i,y_{1},y,\dots,y_{n'}$ is a connecting path for $G_*$ whose $r$-slope, By Lemma~\ref{l:convex}, is a convex linear combination of $s$ (the $r$-slope on the part of the path up to $x_i$) and $t$ (the $r$-slope on the part of the path after $x_i$). Therefore this path is a connecting path for $G_*$ with $r$-slope strictly larger than $s$, a contradiction with the choice of $[x_0,x_{n}]$.
\end{proof}

\begin{claim}\label{c:harmonic}
For every graph $G_*=(V_*,E_*,g_*)$ obtained before the implementation of Step 5 of the algorithm, $g_*:V_* \to \RR$ is $r$-biased infinity harmonic function on $G_*$ with respect to the boundary condition $g: V_0 \to \RR$.
\end{claim}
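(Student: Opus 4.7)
The plan is to induct on the number of iterations of Steps 2--4 already completed. The base case $G_0 = (V_0, \emptyset, g)$ is vacuous, since every vertex of $G_0$ lies in $V_0$. For the inductive step, assume $G_* = (V_*, E_*, g_*)$ is $r$-biased infinity harmonic and that Step 2 selects a connecting path $x_0, x_1, \ldots, x_n$ of maximum $r$-slope $s$, from which Step 3 produces $G_{**}$ via the extension formula of Example~\ref{eg:harmonic}. Vertices of $V_* \setminus V_0$ that are not endpoints of this path retain their $G_*$-neighborhoods and $g$-values, so the inductive hypothesis applies to them unchanged. At each interior vertex $x_i$ of the new path (for $1 \le i \le n-1$), the only $G_{**}$-neighbors are $x_{i-1}$ and $x_{i+1}$; since the edge $r$-slopes along $[x_0,x_n]$ are all equal by construction, Proposition~\ref{p:constant} supplies the harmonic condition at $x_i$.

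The main case concerns an endpoint $x_0 \in V_* \setminus V_0$; the case of $x_n$ will be symmetric. Set $M^* = \max_{y \sim_{G_*} x_0} g_*(y)$ and $m^* = \min_{y \sim_{G_*} x_0} g_*(y)$, so the inductive hypothesis gives $g_*(x_0) = p M^* + q m^*$. The only change in $x_0$'s neighborhood on passing to $G_{**}$ is the insertion of $x_1$ with value $g_{**}(x_1) = r g_*(x_0) + s$, so it suffices to prove the sandwich $m^* \le g_{**}(x_1) \le M^*$; once this is known, the max and min over the new neighborhood are still $M^*$ and $m^*$, and the harmonic identity at $x_0$ persists. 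For the lower bound, Remark~\ref{r:increasing} ensures that the maximum-slope path is oriented in the increasing direction, whence Example~\ref{eg:harmonic} gives $g_{**}(x_1) \ge g_{**}(x_0) = g_*(x_0) \ge m^*$.

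The upper bound is the heart of the argument and draws on Claim~\ref{c:decreasing-slopes}. Fix a $G_*$-neighbor $y^*$ of $x_0$ with $g_*(y^*) = M^*$, and set $\mu = g_*(y^*) - r g_*(x_0) = \sigma_r g_*[x_0,y^*]$; the required inequality becomes $s \le \mu$. Since $(x_0, y^*) \in E_*$, this edge was inserted in some earlier iteration as part of a connecting path $P$ of $r$-slope $s_P$, and along $P$ every edge $r$-slope taken in the increasing direction of $P$ equals $s_P$. If $x_0$ precedes $y^*$ along $P$, then the increasing direction coincides with $x_0 \to y^*$, so $s_P = \mu$; if instead $y^*$ precedes $x_0$ along $P$, then nondecrease along $P$ forces $g_*(y^*) \le g_*(x_0)$, which combined with $g_*(y^*) = M^* \ge g_*(x_0)$ forces $g_*(y^*) = g_*(x_0)$, and a short computation then yields $\mu = (1-r)g_*(x_0) = s_P$. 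In either subcase $\mu = s_P$, and by Claim~\ref{c:decreasing-slopes} we have $s \le s_P = \mu$. The mirror argument handles $x_n$: the upper bound on $g_{**}(x_{n-1})$ follows from the path's nondecrease combined with the harmonic condition at $x_n$, and the lower bound applies the same past-path analysis to an edge joining $x_n$ to a minimum neighbor. The main obstacle I anticipate is precisely the identification $\mu = s_P$, where the two possible orderings of $x_0$ and $y^*$ along $P$ must each be reconciled with the orientation of the edge in $G_*$; it is exactly here that the monotonicity of selected slopes given by Claim~\ref{c:decreasing-slopes} is indispensable.
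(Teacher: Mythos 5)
Your proposal is correct and follows essentially the same route as the paper: induction over iterations of Steps 2--4, Proposition~\ref{p:constant} for the interior vertices of the new path, and a sandwich argument at the endpoints $x_0$, $x_n$ whose key upper (resp.\ lower) bound rests on the slope monotonicity of Claim~\ref{c:decreasing-slopes}. The only (harmless) variation is in that key bound: the paper compares $g_{**}(x_1)$ against the up-neighbor $x_0''$ on the path that introduced $x_0$ itself, which avoids your orientation case analysis, whereas you compare directly against the true maximum neighbor $y^*$ via the path that introduced the edge $(x_0,y^*)$ --- both yield $g_{**}(x_1)\leq \max_{y\sim x_0} g_*(y)$ and the argument goes through.
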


\begin{proof}
The claim is, vacuously, correct for $G_0$. Assume that the claim it is correct for $G_*$. We want to show that the claim is correct for the graph $G_{**}$ that is constructed by following Step 2 and Step 3, starting from the given $G_*$. Let $[x_0,x_n]=x_0,x_1,\dots,x_n$ be the connecting path that is added to $G_*$ to obtain $G_{**}$, let the $r$-slope of this path be $s$ (recall that, by maximality of $s$, we must have $g_*(x_0) \leq g_*(x_n)$).

The $r$-biased infinity Laplacian condition is fulfilled for $g_{**}$ at all vertices other than $x_0,x_1,\dots,x_{n-1},x_n$, since such vertices do not have any new neighbors they did not have in $G_*$ and the condition is, by assumption, satisfied for $g_*$. The condition is also satisfied at the vertices $x_1,\dots,x_{n-1}$, since they all have degree 2 in $G_{**}$ and the $g_{**}$ values on these vertices are constructed exactly as in Example~\ref{eg:harmonic}.

Consider the vertex $x_0$. If $x_0$ is in $V_0$ there is nothing to check. If $x_0$ is not in $V_0$ it was added at some point during the algorithm as an interior vertex in some connecting path, on which by Claim~\ref{c:decreasing-slopes}, the $r$-slope is $t \geq s$. Say $x_0',x_0,x_0''$ is the piece of that path with $g_*(x_0') \leq g_*(x_0) \leq g_*(x_0'')$. Since $g_*(x_0') \leq g_*(x_0) \leq g_{**}(x_1)$, the addition of the vertex $x_1$ does not change the minimum value on the neighbors of $x_0$. Since $t = g_*(x_0'') - r g_*(x_0)$, $s = g_{**}(x_1)- r g_{*}(x_0)$, and $s \leq t$, we have
\[
 g_{**}(x_1)- r g_{*}(x_0) \leq g_*(x_0'') - r g_*(x_0),
\]
which implies that $ g_{**}(x_1) \leq g_*(x_0'')$ and the addition of $x_1$ does not change the maximum value on the neighbors of $x_0$ either.

Similarly, if $x_n$ is in $V_0$ there is nothing to check. If $x_n$ is not in $V_0$ it was added at some point during the algorithm as an interior vertex in some connecting path, on which by Claim~\ref{c:decreasing-slopes}, the $r$-slope is $t \geq s$. Say $x_n',x_n,x_n''$ is the piece of that path with $g_*(x_n') \leq g_*(x_n) \leq g_*(x_n'')$. Since $g_{**}(x_{n-1}) \leq g_*(x_n) \leq g_{*}(x_n'')$, the addition of the vertex $x_{n-1}$ does not change the maximum value on the neighbors of $x_n$. Since $t = g_*(x_n) - r g_*(x_n')$, $s = g_{*}(x_n)- r g_{**}(x_{n-1})$, and $s \leq t$, we have
\[
 g_{*}(x_n)- r g_{**}(x_{n-1}) \leq g_*(x_n) - r g_*(x_n') ,
\]
which implies that $g_{*}(x_n') \leq g_{**}(x_{n-1})$ and the addition of $x_{n-1}$ does not change the minimum value on the neighbors of $x_n$ either.

The claim follows.
\end{proof}

\begin{claim}
The output function $u$ given by the algorithm is $r$-biased infinity harmonic function with respect to the boundary condition $g: V_0 \to \RR$.
\end{claim}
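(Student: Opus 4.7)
The plan is to trace the two possible exit points of the algorithm (Step~5 and Step~6) and, in each case, verify the $r$-biased infinity Laplacian condition at every vertex, leveraging Claim~\ref{c:harmonic} at the level of $G_*$. The essential structural observation I would establish first is that once Step~2 fails to produce a connecting path for the current $G_*$, two facts hold simultaneously: (a)~there are no edges of $E \setminus E_*$ whose endpoints both lie in $V_*$, since any such edge would itself be a connecting path of length one; and (b)~each connected component $C$ of the subgraph of $G$ induced on $V \setminus V_*$ is adjacent in $G$ to at most one vertex of $V_*$, since two distinct vertices of $V_*$ adjacent to $C$, together with an internal path in $C$, would yield a connecting path of positive length with all edges in $E \setminus E_*$. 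Since $G$ is connected and $V_0 \subseteq V_*$ is nonempty, each such component $C$ is in fact adjacent to exactly one vertex of $V_*$, which I denote $x_*(C)$.

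If the algorithm exits at Step~5, then $V_* = V$ and by~(a) also $E_* = E$, so $G_* = G$, and Claim~\ref{c:harmonic} immediately yields that $u = g_*$ is $r$-biased infinity harmonic on $G$ with boundary condition $g$. Otherwise the algorithm exits at Step~6. For any $x \in V \setminus V_*$, lying in some component $C$, every path in $G$ from $x$ to $V_*$ must pass through $x_*(C)$ (its first $V_*$-vertex by~(b)). Hence the closest vertex of $V_*$ to $x$ is $x_*(C)$, and Step~6 assigns the common value $u(x) = g_*(x_*(C))$ to every $x \in C$.

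It remains to verify the harmonic condition at every vertex of $G$. Boundary vertices $x \in V_0$ satisfy $u(x) = g(x)$ by the compatibility of the extensions. For $x$ in some component $C$, every neighbor of $x$ in $G$ either lies in $C$ or equals $x_*(C)$, and in either case carries value $g_*(x_*(C)) = u(x)$, so the maximum and the minimum of neighboring values both equal $u(x)$ and the condition is trivial. For the remaining case $x \in V_* \setminus V_0$, Claim~\ref{c:harmonic} furnishes $pM + qm = g_*(x)$, where $M$ and $m$ are the maximum and minimum of $g_*$ over the $G_*$-neighbors of $x$; since $p,q > 0$ this forces $m \leq g_*(x) \leq M$. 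By~(a), the only new neighbors of $x$ acquired in passing from $G_*$ to $G$ are vertices of components $C$ adjacent to $x$, and for each such $C$ we have $x = x_*(C)$, so these new neighbors all carry value exactly $g_*(x)$. Inserting additional values equal to $g_*(x)$ between $m$ and $M$ affects neither the maximum nor the minimum, so the harmonic identity $pM + qm = g_*(x) = u(x)$ persists at $x$ in $G$.

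The main obstacle is really the structural observation of the first paragraph, particularly establishing the uniqueness of the gateway vertex $x_*(C)$; once that is in hand, the Step~5 case is immediate and the Step~6 case reduces to the routine bookkeeping of maxima and minima supported by Claim~\ref{c:harmonic}.
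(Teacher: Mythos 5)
Your proposal is correct and follows essentially the same route as the paper: both arguments hinge on the observation that the absence of connecting paths forces $E_*=E$ when $V_*=V$, and otherwise forces each vertex of $V\setminus V_*$ to reach $V_*$ only through a unique gateway vertex (your components $C$ are exactly the paper's sets $B(x_*)$), after which the max/min bookkeeping at the gateway and at the constant-valued components is identical. No gaps.
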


\begin{proof}
Let $G_*=(V_*,E_*,g_*)$ be the last partial extension obtained before Step 5 is implemented, which is the extension at the moment when there are no more connecting paths to be found. By Claim~\ref{c:harmonic}, $g_*:V_* \to \RR$ is $r$-biased infinity harmonic function with respect to the boundary condition $g$.

If $V_*=V$, then it must also be true that $E_*=E$, since, otherwise, any edge that is not used in $G_*$ is a connecting path for $G_*$ (of length 1). Thus, in this case, after Step 5 is executed and the algorithm stops, $u=g_*$ is indeed a solution to the Boundary Problem~\eqref{e:bp}.

Assume that $V_* \neq V$. Let $x$ be a vertex in $V \setminus V_*$ and let $x_*$ be the closest vertex to $x$ in $V_*$ (since $G$ is connected there must be a path from $x$ to some vertex in $V_*$). The vertex $x_*$ has the property that every path from $x$ to a vertex in $V_*$ goes through $x_*$ (in particular $x_*$ is the unique vertex on $V_*$ that minimizes the distance to $x$). Indeed, if there is a path from $x$ to another vertex $x_*'$ in $V_*$ that does not go through $x_*$, then there is a connecting path between $x_*$ and $x_*'$, a contradiction, since there are no connecting paths for $G_*$.

Given $x_*$ in $V_*$, let $B(x_*)$ be the set, possibly empty, of vertices $y$ in $V \setminus V_*$ such that all paths from $y$ to a vertex in $V_*$ go through $x_*$. The function $g_{**}$ assigns the same value $g_*(x_*)$ to all vertices in $B(x_*)$. For $y$ in $B(x_*)$, all neighbors of $y$ are in $B(x_*)\cup\{x_*\}$. Thus they all have the same $g_{**}$ value and the $r$-biased infinity Laplacian condition is satisfied at $y$. If $x_*$ is not in $V_0$ the condition for $g_{**}$ is satisfied at $x_*$. Indeed, since it is satisfied for $g_*$, the vertex $x_*$ has neighbors $x_*'$ and $x_*''$ in $V_*$ such that $g_*(x_*') \leq g_*(x_*) \leq g_*(x_*'')$ and $p g_*(x_*'') + q g_*(x_*') = g(x_*)$. The $g_{**}$ values of all new neighbors of $x_*$ in $B(x_*)$ are equal to $g_*(x_*)$, so the maximum and the minimum on the neighbors of $x_*$ are not changed and the condition is still satisfied for $g_{**}$ at $x_*$. If $x_*$ is in $V_0$ then there is nothing to check at that vertex. Finally, since $G$ is connected, $V_* \cup \left(\cup_{x_* \in V_*} B(x_*)\right) = V$ and, after Step 6 is executed and the algorithm stops, $u=g_{**}$ is indeed a solution to the Boundary Problem~\eqref{e:bp}.
\end{proof}

\begin{claim}
The algorithm always stops and the number of steps is bounded by a polynomial function $P(n)$ in the number of input vertices $n$.
\end{claim}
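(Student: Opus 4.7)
The plan is to bound two quantities: (a) the number of iterations of the main loop Steps 2--4, and (b) the amount of work per iteration. Steps 5 and 6 are each executed at most once at the end, and are easily seen to take polynomial time: for Step 6, compute the connected components of $G$ after removing the vertices of $V_*$, and for each such component $C$ determine the unique vertex $x_*\in V_*$ adjacent to $C$, which exists and is unique by the argument already used in the previous claim.

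For (a), I would observe that each iteration of Step 3 extends $E_*$ by at least one new edge: the chosen connecting path has length $\geq 1$, and by definition all of its edges lie in $E\setminus E_*$. Since $|E|\leq \binom{n}{2}$, the main loop executes at most $O(n^2)$ times.

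For (b), the substantive point is that the selection in Step 2 of a connecting path of largest $r$-slope can be carried out in polynomial time. By Remark~\ref{r:increasing}, the maximum $r$-slope over all connecting paths for $G_*$ is attained in the increasing direction; hence I may restrict attention to ordered pairs $(a,b)\in V_*\times V_*$ with $g_*(a)\le g_*(b)$. For each such fixed pair, all connecting paths from $a$ to $b$ share the same endpoint values, so Remark~\ref{r:short-steep} applies and shows that the $r$-slope is strictly decreasing in the path length. The maximum-slope connecting path between $a$ and $b$ is therefore simply the shortest such path, which can be found by a single BFS from $a$ in the auxiliary graph with edge set $E\setminus E_*$, restricted so that intermediate vertices lie in $V\setminus V_*$ and only endpoints lie in $V_*$. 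Running this BFS once from each vertex of $V_*$ and then performing an $O(n^2)$ scan over pairs to select the largest $r$-slope implements Step 2 in $O(n^3)$ time.

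Combining (a) and (b) yields a total running time of $O(n^5)$, which is polynomial, and in particular the algorithm terminates. The main obstacle I expect is not the edge-counting termination bound (which is routine) but the reduction of Step 2 to a shortest-path search; this is where the two remarks must be invoked cleanly, since \emph{a priori} the space of candidate connecting paths is exponentially large. Once this reduction is in hand, BFS gives a polynomial bound without further difficulty.
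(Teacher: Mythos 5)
Your proof is correct and follows essentially the same route as the paper: termination comes from the fact that each pass through the main loop adds at least one new edge (hence $O(n^2)$ iterations), and Step 2 is implemented in polynomial time by reducing the maximum-slope search to shortest connecting paths between pairs of vertices of $V_*$, via Remarks~\ref{r:increasing} and~\ref{r:short-steep}. Your version merely spells out the BFS implementation and the resulting $O(n^5)$ bound in more detail than the paper does.
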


\begin{proof}
Globally speaking, at the level of the basic steps of the algorithm, there is only one loop, namely the algorithm loops through Step 2, Step 3, Step 4 and back to Step 2, until there are no more connecting paths to be found. The fact that the algorithm stops follows from the fact that each time this loop is executed at least one new edge is added. Thus, after $O(n^2)$ executions of this loop there are no more connecting paths, the algorithm goes through Step 5 or Step 6 and stops.

Of the individual steps, Step 2 can be executed in polynomial time in $n$, since it boils down to finding shortest connecting paths between all pairs of vertices in $V_*$ (recall that shorter connecting paths between same pairs of vertices have larger $r$-slope) and Step 6 can be executed in polynomial time in $n$ since it boils down to finding shortest paths from each vertex in $V \setminus V_*$ to a vertex in $V_*$.
\end{proof}


\section{Examples}

In Figure~\ref{f:small}, Figure~\ref{f:medium}, and Figure~\ref{f:large} we provide examples of $r$-biased infinity harmonic functions on the same graph $G$ with the same boundary condition, for different values of $r$.

\begin{figure}[!ht]
\[
\xymatrix@!0{
 &&&&&&& *++[o][F]{} \ar@{->}[dr]\ar@{->>}[dl] \ar@{}[llllll]|{\frac{1+4r+8r^2+8r^3+4r^4}{1+4r+8r^2+11r^3+11r^4+8r^5+4r^6+r^7}}&&&&&&&
 \\
 &&&&&&& &&&&&&&
 \\
 &&&&& *++[o][F]{} \ar@{->}[rr] \ar@{->>}[dl] \ar@{..}[ur] \ar@{}[lllll]|{\frac{1+3r+4r^2+2r^3}{1+3r+5r^2+6r^3+5r^4+3r^5+r^6}}
 &&&& *++[o][F]{}  \ar@{->}[dr] \ar@{->>}[ll] \ar@{..}[ul] \ar@{}[rrrrr]|{\frac{1+3r+5r^2+4r^3+2r^4}{1+3r+5r^2+6r^3+5r^4+3r^5+r^6}}&&&&&
 \\
 &&&&&&& &&&&&&&
 \\
 &&& *++[o][F]{.} \ar@{->}[ur] \ar@{->>}[dl] \ar@{..}[dr] \ar@{}[rrrrr]|{\frac{1+2r+2r^2}{1+3r+5r^2+6r^3+5r^4+3r^5+r^6}} &&&&&&
 && *++[o][F]{,}  \ar@{->}[dr] \ar@{->>}[dl] \ar@{..}[ul]  \ar@{}[rrr]|{\frac{1+2r+2r^2}{1+2r+2r^2+r^3}}&&&
 \\
 & &&&&&& &&&&&& &
 \\
 & *++[F=]{0} \ar@{-}[ur] \ar@{-}[rr]
 &&&& *++[o][F]{} \ar@{->}[rr] \ar@{->>}[ll] \ar@{..}[ul] \ar@{}[urr]|{\frac{1}{1+r+r^2}} &&&
 &*++[o][F]{} \ar@{->}[rr] \ar@{->>}[ll] \ar@{..}[ur] \ar@{}[ull]|{\frac{1+r}{1+r+r^2}}
 &&&& *++[F=]{1} \ar@{-}[ul] \ar@{-}[ll] &
}
\]
\caption{$r$-biased infinity harmonic function, for $r \leq 1/r_0$}
\label{f:small}
\end{figure}
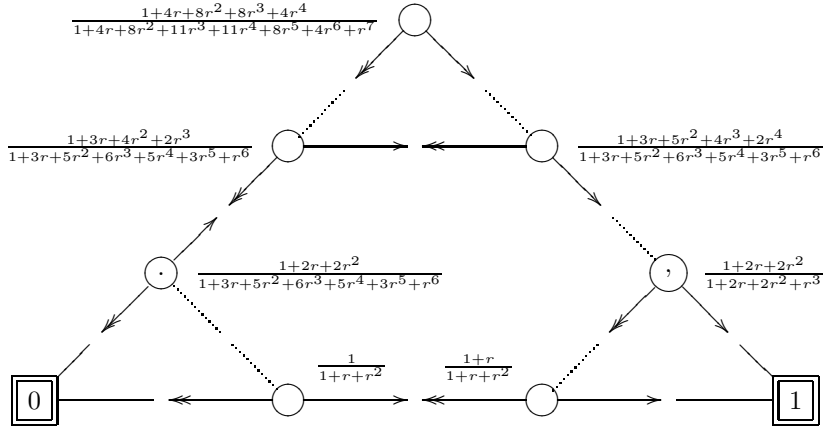

\begin{figure}[!ht]
\[
\xymatrix@!0{
 &&&&&&& *++[o][F]{} \ar@{->}[dr]\ar@{->>}[dl] \ar@{}[llll]|{\frac{1+2r+2r^2}{1+2r+2r^2+2r^3+2r^4+r^5}} &&&&&&&
 \\
 &&&&&&& &&&&&&&
 \\
 &&&&& *++[o][F]{} \ar@{->}[rr] \ar@{->>}[dl] \ar@{..}[ur] \ar@{}[lll]|{\frac{1+r}{1+r+r^2+r^3+r^4}}
 &&&&  *++[o][F]{} \ar@{->}[dr] \ar@{->>}[ll] \ar@{..}[ul] \ar@{}[rrr]|{\frac{1+r+r^2}{1+r+r^2+r^3+r^4}} &&&&&
 \\
 &&&&&&& &&&&&&&
 \\
 &&& *++[o][F]{.} \ar@{->}[ur] \ar@{->>}[dl] \ar@{..}[dr] \ar@{}[lll]|{\frac{1}{1+r+r^2+r^3+r^4}} &&&&&&
 && *++[o][F]{,}  \ar@{->}[dr] \ar@{->>}[ul] \ar@{..}[dl] \ar@{}[rrr]|{\frac{1+r+r^2+r^3}{1+r+r^2+r^3+r^4}} &&&
 \\ & &&&&&& &&&&&& &\\
 &*++[F=]{0} \ar@{-}[ur] \ar@{-}[rr]
 &&&& *++[o][F]{} \ar@{->}[rr] \ar@{->>}[ll] \ar@{..}[ul] \ar@{}[urr]|{\frac{1}{1+r+r^2}} &&&
 &*++[o][F]{}     \ar@{->}[rr] \ar@{->>}[ll] \ar@{..}[ur] \ar@{}[ull]|{\frac{1+r}{1+r+r^2}}
 &&&& *++[F=]{1} \ar@{-}[ul] \ar@{-}[ll] &
}
\]
\caption{$r$-biased infinity harmonic function, for $1/r_0 \leq r \leq r_0$}
\label{f:medium}
\end{figure}

\begin{figure}[!ht]
\[
\xymatrix@!0{
 &&&&&&& *++[o][F]{} \ar@{->}[dr]\ar@{->>}[dl] \ar@{}[llllll]|{\frac{1+4r+8r^2+7r^3+3r^4}{1+4r+8r^2+11r^3+11r^4+8r^5+4r^6+r^7}}&&&&&&&
 \\
 &&&&&&& &&&&&&&
 \\
 &&&&& *++[o][F]{} \ar@{->}[rr] \ar@{->>}[dl] \ar@{..}[ur] \ar@{}[lllll]|{\frac{1+3r+3r^2+2r^3}{1+3r+5r^2+6r^3+5r^4+3r^5+r^6}}
 &&&& *++[o][F]{}  \ar@{->}[dr] \ar@{->>}[ll] \ar@{..}[ul]  \ar@{}[rrrrr]|{\frac{1+3r+5r^2+4r^3+r^4}{1+3r+5r^2+6r^3+5r^4+3r^5+r^6}}&&&&&
 \\
 &&&&&&& &&&&&&&
 \\
 &&& *++[o][F]{.} \ar@{->}[dr] \ar@{->>}[dl] \ar@{..}[ur] \ar@{}[lll]|{\frac{1}{1+2r+2r^2+r^3}} &&&&&&
 && *++[o][F]{,}  \ar@{->}[dr] \ar@{->>}[ul] \ar@{..}[dl] \ar@{}[lllll]|{\frac{1+3r+5r^2+6r^3+3r^4+r^5}{1+3r+5r^2+6r^3+5r^4+3r^5+r^6}}&&&
 \\ & &&&&&& &&&&&& &\\
 &*++[F=]{0} \ar@{-}[ur] \ar@{-}[rr]
 &&&& *++[o][F]{} \ar@{->}[rr] \ar@{->>}[ll] \ar@{..}[ul] \ar@{}[urr]|{\frac{1}{1+r+r^2}} &&&
 &*++[o][F]{}     \ar@{->}[rr] \ar@{->>}[ll] \ar@{..}[ur] \ar@{}[ull]|{\frac{1+r}{1+r+r^2}}
 &&&& *++[F=]{1} \ar@{-}[ul] \ar@{-}[ll] &
}
\]
\caption{$r$-biased infinity harmonic function, for $r_0 \leq r$}
\label{f:large}
\end{figure}
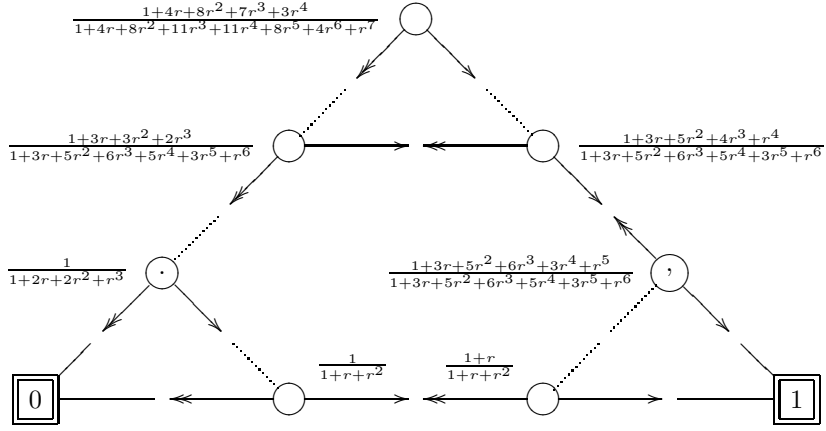

The boundary vertices are the two corner vertices in the bottom row with values 0 and 1 (double square frames are used to indicate these vertices). We keep the conventions established in Example~\ref{eg:nopath}. Thus, for every vertex $x$ in $V \setminus V_0$, there is an outgoing  single arrow ($\rightarrow$) pointing toward the neighbor with maximum value and an outgoing double arrow ($\twoheadrightarrow$) pointing toward the neighbor with minimum value. Since the two boundary vertices have values 0 and 1, the value at each vertex may be interpreted as the winning probability for Player I, under optimal play of both players, when the token is at that vertex (Player I wins if the token reaches the lower right corner and loses if the token reaches the bottom left corner).

Depending on $r$, there are exactly three different combined optimal strategies of Player I and Player II, as indicated in the three figures. The critical values $r_0 \approx 1.3247$ and $1/r_0 \approx 0.7549$ at which the behavior changes are the unique real roots of the polynomials $z^3-z-1$ and $z^3+z^2-1$, respectively.

For instance, if the token is in the third row on the left (the vertex indicated by a dot in the circle) and $r \leq r_0$, Player I should ``play conservatively'' and move the token up and to the right, away from the winning corner for Player II, and if $r \geq r_0$, Player I should ``go for broke'' and move down and to the right, ignoring the proximity of the winning corner for Player II. Note that the latter strategy is optimal for Player I only when the bias of the game is moderately to extremely favorable to Player II (approximately when $p \leq 0.43$). Symmetrically, when the token is in the third row on the right (the vertex indicated by a comma in the circle), the optimal play of Player II depends on whether $r \leq 1/r_0$ or $r \geq 1/r_0$ (the ``go for broke'' strategy of going down and to the left occurs when $r \leq 1/r_0$, which is approximately when $p \geq 0.57$). In the medium range, $1/r_0 \leq r \leq r_0$, both players should ``play conservatively''.


\newcommand{\etalchar}[1]{$^{#1}$}


\end{document}